\def\B'c{{\mathcal{B'}}}
\def\U'c{{\mathcal{U'}}}
\def\opn#1#2{\def#1{\operatorname{#2}}}
\opn\chara{char}
\opn\length{\ell}
\opn\projdim{proj\,dim}
\opn\injdim{inj\,dim}
\opn\ini{in}
\opn\rank{rank}
\opn\depth{depth}
\opn\height{ht}
\opn\embdim{emb\,dim}
\opn\codim{codim}
\opn\Tr{Tr}
\opn\bigrank{big\,rank}
\opn\superheight{superheight}\opn\lcm{lcm}
\opn\trdeg{tr\,deg}\opn\reg{reg}
\opn\lreg{lreg}
\opn\set{set}
\opn\supp{Supp}
\opn\shad{Shad}
\opn\del{del}
\opn\div{div}
\opn\Div{Div}
\opn\cl{cl}
\opn\Cl{Cl}
\opn\Spec{Spec}
\opn\Supp{Supp}
\opn\supp{supp}
\opn\Sing{Sing}
\opn\Ass{Ass}
\opn\Ann{Ann}
\opn\Rad{Rad}
\opn\Soc{Soc}
\opn\Ker{Ker}
\opn\Coker{Coker}
\opn\Im{Im}
\opn\Hom{Hom}
\opn\Tor{Tor}
\opn\Ext{Ext}
\opn\End{End}
\opn\Aut{Aut}
\opn\id{id}
\opn\nat{nat}
\opn\GL{GL}
\opn\SL{SL}
\opn\mod{mod}
\opn\ord{ord}
\opn\aff{aff}
\opn\con{conv}
\opn\relint{relint}
\opn\st{st}
\opn\lk{lk}
\opn\cn{cn}
\opn\core{core}
\opn\vol{vol}
\opn\gr{gr}
\def\pot#1#2{#1[\kern-0.28ex[#2]\kern-0.28ex]}
\opn\dirlim{\underrightarrow{\lim}}
\opn\invlim{\underleftarrow{\lim}}
\def\pnt{{\raise0.5mm\hbox{\large\bf.}}}
\def\Implies{\ifmmode\Longrightarrow \else
     \unskip${}\Longrightarrow{}$\ignorespaces\fi}
\def\implies{\ifmmode\Rightarrow \else
     \unskip${}\Rightarrow{}$\ignorespaces\fi}
\def\iff{\ifmmode\Longleftrightarrow \else
     \unskip${}\Longleftrightarrow{}$\ignorespaces\fi}
\newtheorem{Theorem}{Theorem}[section]
\newtheorem{Lemma}[Theorem]{Lemma}
\newtheorem{Corollary}[Theorem]{Corollary}
\newtheorem{Proposition}[Theorem]{Proposition}
\newtheorem{Remark}[Theorem]{Remark}
\let\epsilon=\varepsilon
\let\phi=\varphi
\let\kappa=\varkappa
\begin{document}
\title{On the Hilbert series of vertex cover algebras of unmixed
bipartite graphs}
\author{Cristian Ion}
\address{Faculty of Mathematics and Computer Science, Ovidius University,
Bd.\ Mamaia 124, 900527 Constanta, Romania,}
\email{cristian.adrian.ion@gmail.com}
\maketitle

\begin{abstract}
We compute the reduced Gr\"{o}bner basis of the toric ideal with respect 
to a suitable monomial order and we study the Hilbert series of the vertex
cover algebra $A(G)$, where $G$ is an unmixed bipartite graph
without isolated vertices. \\

{\bf MSC}: 05E40, 13P10.\\

{\bf Keywords}: unmixed bipartite graph, vertex cover algebra, toric ideal, Hilbert series.
\end{abstract}

\section*{Introduction}
Let $G=(V,E)$ be a simple (i.e., finite, undirected, loopless and without
multiple edges) graph with the vertex set $V=[n]$ and the edge set $E=E(G)$. 
For $k\in \mathbb{N}$, a $k$-vertex cover of $G$ is a vector 
$c=(c_{1},c_{2},...,c_{n})\in \mathbb{N}^{n}$ such that $c_{i}+c_{j}\geq k$ 
for every edge $\{i,j\}$ of $G$. 

Let $R=K[x_{1},x_{2},...,x_{n}]$ be the polynomial ring over a field $K$. The 
vertex cover algebra $A(G)$ is defined as the subalgebra of the one variable 
polynomial ring $R[t]$ generated by all monomials $x_{1}^{c_{1}}x_{2}^{c_{2}}...x_{n}^{c_{n}}t^k$, 
where $c=(c_{1},c_{2},...,c_{n})$ is a $k$-vertex cover of $G$. This algebra was 
introduced and studied in \cite{HerHib3}. Let $\mathfrak{m}$ be the maximal graded 
ideal of $R$. $\bar{A}(G)=A(G)/\mathfrak{m}A(G)$ is called the basic cover algebra 
and it was studied in \cite{HerHib2}, \cite{BenCon} and \cite{BerMic}. In \cite{Ion}, the Hilbert 
series of $A(G)$, $H_{A(G)}(z)$, for a Cohen-Macaulay bipartite graph $G$ is studied 
and several consequences are derived.

Our main aim in this paper is to extend the study of $H_{A(G)}(z)$ for unmixed 
bipartite graphs. It will turn out that many of the results concerning the 
Cohen-Macaulay case extend naturally to the larger class of unmixed bipartite graphs. 
The first step in getting the formula for $H_{A(G)}(z)$ is to compute the toric ideal 
of $A(G)$. This is done in Section 2. 

In the last section we state the main theorem which relates $H_{A(G)}(z)$ to the 
Hilbert series of the basic cover algebras $\bar{A}(G_{F})$, for all 
$F\subset \lbrack n]$. Based on this formula we derive sharp bounds for the 
multiplicity of $A(G)$.

\section{The lattice associated to an unmixed bipartite graph}
Let $G$ be an unmixed bipartite graph without isolated vertices. By \cite[Theorem 1.1]{Vil}  
we may assume that $G$ admits a bipartition of its vertices $V_{n}=W\cup W^{\prime }$, where $W=\{x_{1},...,x_{n}\}$ and $W^{\prime }=\{y_{1},...,y_{n}\}$, $n\geq 1$, such that:

\begin{itemize}
\item[(a)] $\{x_{i},y_{i}\}\in E(G)$, for all $i\in \lbrack n]$;

\item[(b)] if $\{x_{i},y_{j}\}\in E(G)$ and $\{x_{j},y_{k}\}\in E(G_{n})$,
then $\{x_{i},y_{k}\}\in E(G_{n})$, for all distinct $i,j,k\in \lbrack n]$.
\end{itemize}

Throughout this paper, whenever we refer to an unmixed bipartite graph we
assume it is given with its above bipartition.

For $\emptyset \neq F\subset \lbrack n]$ we denote by $G_{F}$ the subgraph of $G$
induced by the subset $V_{F}=\{x_{i}|i\in F\}\cup \{y_{i}|i\in F\}$.

\begin{Remark} \emph{$G_{F}$ also satisfies $(a)$ and $(b)$, hence $G_{F}$ is an unmixed bipartite graph 
on $V_{F}$ and each minimal vertex cover of $G_{F}$ has the cardinality equal to 
$\left\vert F\right\vert $.}
\end{Remark}

Let $K_{\{i,j\}}$, $1\leq i<j\leq n$, be the complete bipartite graph on 
$\{x_{i},x_{j}\}\cup \{y_{i},y_{j}\}$.

\begin{Lemma}
\label{vertcov} Let $G$ be an unmixed bipartite graph on $V_{n}=W\cup W^{\prime }$,  
$n\geq 2$. Suppose that $G$ has an induced 
subgraph $K_{\{i,j\}}$ with $1\leq i<j\leq n$. Let $H$ be the subgraph of $G$ 
induced by the subset $V_{n}\backslash \{x_{j},y_{j}\}$. Then there exists a
one-to-one correspondence between the sets $\mathcal{M}(G)$, respectively  
$\mathcal{M}(H)$, of minimal vertex covers of $G$, respectively $H $. More precisely, 
for all subsets $C\subset V_{n}\backslash \{x_{j},y_{j}\}$ we have:

\begin{itemize}
\item[(i)] if $x_{i}\in C$, then ${C\in }$ $\mathcal{M}(H)\Leftrightarrow
C\cup \{x_{j}\}\in \mathcal{M}(G)$;

\item[(ii)] if $x_{i}\not\in C$, then ${C\in }$ 
$\mathcal{M}(H)\Leftrightarrow C\cup \{y_{j}\}\in \mathcal{M}(G)$.
\end{itemize}
\end{Lemma}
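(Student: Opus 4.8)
The plan is to exhibit the correspondence explicitly as the map $\Phi\colon \mathcal{M}(H)\to\mathcal{M}(G)$ sending $C$ to $C\cup\{x_j\}$ when $x_i\in C$ and to $C\cup\{y_j\}$ when $x_i\notin C$, and to observe that the two equivalences (i) and (ii) say exactly that $\Phi$ is a well-defined bijection with inverse $D\mapsto D\setminus\{x_j,y_j\}$. Before anything else I would record two bookkeeping facts that remove all direct minimality verifications. First, $H$ is the induced subgraph $G_{[n]\setminus\{j\}}$, so by the Remark it is again unmixed and all its minimal vertex covers have cardinality $n-1$, while those of $G$ have cardinality $n$. Second, in an unmixed graph any vertex cover whose cardinality equals that common value is automatically minimal (it contains a minimal cover of the same size, hence equals it). Thus throughout I only ever check the vertex-cover property together with a cardinality count.

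The key structural input, and the place where hypothesis (b) does the real work, is the dichotomy: \emph{no minimal vertex cover $C$ of $H$ contains both $x_i$ and $y_i$}. Since $\{x_i,y_i\}\in E(H)$, every cover of $H$ contains at least one of them, so the dichotomy says exactly one of $x_i,y_i$ lies in $C$, and this is what splits the lemma into its two cases. To prove it I would argue by contradiction via minimality: if $x_i,y_i\in C$, minimality supplies ``private'' neighbours, i.e.\ an edge $\{x_i,y_p\}\in E(H)$ with $y_p\notin C$ and an edge $\{x_q,y_i\}\in E(H)$ with $x_q\notin C$. If $p=q$ then $\{x_p,y_p\}\in E(H)$ by (a) is an edge with both endpoints outside $C$, a contradiction; otherwise the indices $q,i,p$ are distinct and feeding $\{x_q,y_i\}$ and $\{x_i,y_p\}$ into (b) forces $\{x_q,y_p\}\in E(H)$, again an edge avoided by $C$, a contradiction.

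With these in hand the forward implications reduce to a single covering computation, which I expect to be the main obstacle. In case (i), given $C\in\mathcal{M}(H)$ with $x_i\in C$, the set $C\cup\{x_j\}$ obviously covers every edge of $H$ and every edge through $x_j$, so the only edges in doubt are those $\{x_k,y_j\}$ with $k\neq j$. For $k=i$ such an edge is covered by $x_i$; for $k\neq i,j$ I would apply (b) to $\{x_k,y_j\}$ and the edge $\{x_j,y_i\}$ of the induced $K_{\{i,j\}}$ to obtain $\{x_k,y_i\}\in E(H)$, and then use that $C$ covers this edge while $y_i\notin C$ (by the dichotomy) to conclude $x_k\in C$. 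Hence $C\cup\{x_j\}$ is a vertex cover of $G$ of cardinality $n$, so it is minimal. Case (ii) is entirely symmetric: here $x_i\notin C$ forces $y_i\in C$, and (b) applied to $\{x_j,y_k\}$ and the edge $\{x_i,y_j\}$ of $K_{\{i,j\}}$ handles the edges through $y_j$.

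The reverse implications are the routine direction and use neither (b) nor the dichotomy. If $C\cup\{x_j\}\in\mathcal{M}(G)$ with $x_i\in C$, then since $C\subseteq V_n\setminus\{x_j,y_j\}$ we have $y_j\notin C\cup\{x_j\}$, so every edge of $H$ (containing neither $x_j$ nor $y_j$) is already covered by $C$; as $|C|=n-1$ this makes $C$ a minimal vertex cover of $H$, and symmetrically in case (ii). Combining the four implications shows $\Phi$ is a bijection $\mathcal{M}(H)\to\mathcal{M}(G)$, which is the asserted one-to-one correspondence.
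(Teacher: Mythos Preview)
Your argument is correct and follows essentially the same route as the paper: the forward direction in case (i) uses condition (b) with the edge $\{x_j,y_i\}$ of $K_{\{i,j\}}$ to convert an edge $\{x_k,y_j\}$ into $\{x_k,y_i\}\in E(H)$, and then the dichotomy $y_i\notin C$ forces $x_k\in C$; the reverse direction restricts and counts. Two small remarks. First, your proof of the dichotomy via private neighbours and (b) is more work than needed: since $|C|=n-1$ and the $n-1$ edges $\{x_l,y_l\}$, $l\neq j$, form a perfect matching of $V(H)$, each contributes exactly one vertex to $C$; this is what the paper invokes when it writes $|C\cap\{x_i,y_i\}|=1$. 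Second, the four implications in (i) and (ii) do not by themselves give surjectivity of $\Phi$: for $B\in\mathcal{M}(G)$ with, say, $x_j\in B$ and $y_j\notin B$, you still need $x_i\in B$ so that $C=B\setminus\{x_j\}$ lands in case (i). The paper supplies this one-line step (the edge $\{x_i,y_j\}$ forces $x_i\in B$), and you should too before concluding that $\Phi$ is a bijection.
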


\begin{proof}
Let $C\in \mathcal{M}(H)$. If $x_{i}\in C$, put $B=C\cup \{x_{j}\}$. We
show that $B\in \mathcal{M}(G)$. $B\cap \{x_{k},y_{l}\}\neq \emptyset $%
, for all $\{x_{k},y_{l}\}\in E(H)$ and $B\cap \{x_{j},y_{l}\}\neq \emptyset 
$, for all $\{x_{j},y_{l}\}\in E(G)$. Let $\{x_{k},y_{j}\}\in E(G)$ with $%
k\notin \{i,j\}$. Since $\{x_{k},y_{j}\}\in E(G)$ and $\{x_{j},y_{i}\}\in
E(G)$, it follows, by $(b)$, that $\{x_{k},y_{i}\}\in E(G)$. Hence $%
\{x_{k},y_{i}\}\in E(H)$ and $C\cap \{x_{k},y_{i}\}\neq \emptyset $. $C\in 
\mathcal{M}(H)$ implies that $\left\vert C\cap \{x_{i},y_{i}\}\right\vert
=1 $ and, since $x_{i}\in C$, we have that $y_{i}\notin C$. On the other
hand, $C\cap \{x_{k},y_{i}\}\neq \emptyset $, hence $x_{k}\in C$. Thus $%
B\cap \{x_{k},y_{i}\}\neq \emptyset $ and $B\in \mathcal{M}(G)$. If $%
y_{i}\in C$, put $B=C\cup \{y_{j}\}$. Similarly, it can be proved that $B\in 
\mathcal{M}(G)$.

Conversely, let $B\in \mathcal{M}(G)$. Then $\left\vert B\cap
\{x_{j},y_{j}\}\right\vert =1$, which implies that either $B\cap
\{x_{j},y_{j}\}=\{x_{j}\}$ or $B\cap \{x_{j},y_{j}\}=\{y_{j}\}$. If $B\cap
\{x_{j},y_{j}\}=\{x_{j}\}$, then, since $B\cap \{x_{i},y_{j}\}\neq \emptyset 
$, it follows that $x_{i}\in B$. Put $C=B\cap (V_{n}\backslash
\{x_{j},y_{j}\})$. For all $\{x_{k},y_{l}\}\in E(H)$ we have$\ \emptyset
\neq B\cap \{x_{k},y_{l}\}\subset $ $B\backslash \{x_{j}\}=C$, hence $C$ is
a vertex cover of $H$. Since $\left\vert C\right\vert =\left\vert
B\right\vert -1=n-1$, we get $C\in \mathcal{M}(H)$. Similarly, if $B\cap
\{x_{j},y_{j}\}=\{y_{j}\}$, then $x_{j}\notin B$ and $C=B\backslash
\{y_{j}\}\in \mathcal{M}(H)$.
\end{proof}

Herzog and Hibi proved in \cite[Theorem 1.2]{HerHib2} that each unmixed bipartite 
graph on $V_{n}=W\cup W^{\prime }$, $n\geq 1$, can be uniquely associated to a sublattice 
$\mathcal{L}_{G}$ of the Boolean lattice $\mathcal{L}_{n}$ on $\{p_{1},p_{2},...,p_{n}\}$ 
such that $\emptyset \in \mathcal{L}_{G}$ and 
$\{p_{1},p_{2},...,p_{n}\}\in \mathcal{L}_{G}$. The lattice $\mathcal{L}_{G}$ 
is defined as $\{\alpha \subset \{p_{1},p_{2},...,p_{n}\} | C\in \mathcal{M}%
(G),p_{k}\in \alpha $ $\Leftrightarrow $ $x_{k}\in C\}$.

\begin{Corollary}
\label{isolat} In the hypothesis of Lemma \ref{vertcov} and with the above
notation, we have $\mathcal{L}_{H}\simeq \mathcal{L}_{G}{\text{.}}$
\end{Corollary}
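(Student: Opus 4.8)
The plan is to promote the set-theoretic bijection $\mathcal{M}(H) \to \mathcal{M}(G)$ furnished by Lemma~\ref{vertcov} to an explicit map $\phi \colon \mathcal{L}_H \to \mathcal{L}_G$, and then to check that $\phi$ is an isomorphism of lattices. Since $H$ is the graph induced on $V_n \setminus \{x_j,y_j\}$, by the Remark it is again unmixed bipartite, so $\mathcal{L}_H$ is a sublattice of the Boolean lattice on $\{p_k \mid k \in [n]\setminus\{j\}\}$, while $\mathcal{L}_G$ sits inside the Boolean lattice on $\{p_1,\dots,p_n\}$. Both are ordered by inclusion, and in each the meet and join are intersection and union; hence it is enough to exhibit an inclusion-preserving bijection whose inverse is also inclusion-preserving, since such a map automatically respects meets and joins and is therefore a lattice isomorphism.

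First I would write down $\phi$ explicitly by composing three bijections: the identification $\alpha \leftrightarrow C$ of $\mathcal{L}_H$ with $\mathcal{M}(H)$ (namely $p_k \in \alpha \Leftrightarrow x_k \in C$ for $k \neq j$), the correspondence $C \leftrightarrow B$ of Lemma~\ref{vertcov}, and the identification $B \leftrightarrow \beta$ of $\mathcal{M}(G)$ with $\mathcal{L}_G$. For $k \neq j$ the covers $B$ and $C$ agree on $V_n \setminus \{x_j,y_j\}$, so $p_k \in \beta \Leftrightarrow p_k \in \alpha$; for the index $j$, Lemma~\ref{vertcov} says $x_j \in B$ precisely when $x_i \in C$, i.e.\ precisely when $p_i \in \alpha$. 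This yields the closed form
\[
\phi(\alpha) = \begin{cases} \alpha \cup \{p_j\}, & \text{if } p_i \in \alpha, \\ \alpha, & \text{if } p_i \notin \alpha. \end{cases}
\]
Because Lemma~\ref{vertcov} provides a genuine one-to-one correspondence, $\phi$ is a bijection, with inverse given by deleting $p_j$ from an element of $\mathcal{L}_G$ and leaving the remaining coordinates unchanged.

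It then remains to verify that $\phi$ is an order-isomorphism, i.e.\ that $\alpha \subseteq \alpha' \Leftrightarrow \phi(\alpha) \subseteq \phi(\alpha')$ for $\alpha,\alpha' \in \mathcal{L}_H$. On the coordinates $p_k$ with $k \neq j$ the map $\phi$ is the identity inclusion, so both directions reduce to comparing the single coordinate $p_j$: if $\alpha \subseteq \alpha'$ and $p_j \in \phi(\alpha)$, then $p_i \in \alpha \subseteq \alpha'$, forcing $p_j \in \phi(\alpha')$; conversely, $\phi(\alpha) \subseteq \phi(\alpha')$ restricts to $\alpha \subseteq \alpha'$ on the remaining coordinates. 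I expect the only delicate point to be the bookkeeping that ties the distinguished coordinate $p_j$ to $p_i$ correctly, i.e.\ ensuring that the case split ``$p_i \in \alpha$ versus $p_i \notin \alpha$'' in the definition of $\phi$ matches the case split ``$x_i \in C$ versus $x_i \notin C$'' of Lemma~\ref{vertcov}. Once the explicit formula for $\phi$ is pinned down, order preservation and its converse are immediate, and $\phi$ is the desired isomorphism $\mathcal{L}_H \simeq \mathcal{L}_G$.
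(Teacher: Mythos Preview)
Your proposal is correct and follows exactly the same approach as the paper: you define the map $\phi$ by the very formula the paper uses for its $\nu$, invoke Lemma~\ref{vertcov} for bijectivity, and then supply the easy order-isomorphism check that the paper leaves to the reader.
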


\begin{proof}
Let $\nu :\mathcal{L}_{H}\rightarrow \mathcal{L}_{G}$ defined by $\nu
(\alpha )=%
\begin{cases}
\alpha \cup \{p_{j}\}, & \text{if }p_{i}\in \alpha \text{,} \\ 
\alpha , & \text{if }p_{i}\not\in \alpha \text{.}%
\end{cases}%
\ $. By Lemma \ref{vertcov}, $\nu $ is well defined and bijective. It can be
easily checked that $\nu $ is a lattice isomorphism.
\end{proof}

We show that $G$ has a unique Cohen-Macaulay bipartite subgraph, up to a graph 
isomorphism, such that the lattices associated to $G$ and $G^{\prime}$ are isomorphic.

\begin{Proposition}
\label{isol} Let $G$ be an unmixed bipartite graph on $V_{n}=W\cup W^{\prime }$, 
$n\geq 1$, without isolated vertices. Then there exists a Cohen-Macaulay bipartite 
subgraph $G^{\prime }$ of $G$, unique up to a graph isomorphism, such that 
$\mathcal{L}_{G^{\prime }}\simeq \mathcal{L}_{G}$. In particular, all maximal 
chains of $\mathcal{L}_{G}$ have the same length.
\end{Proposition}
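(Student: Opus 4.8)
The plan is to build $G^{\prime}$ by iterated vertex deletion, recognize it through the Herzog--Hibi structure theorem for Cohen--Macaulay bipartite graphs, and then extract uniqueness and the chain condition from Birkhoff's theorem; here ``subgraph'' is understood as ``induced subgraph on a subset of the vertex pairs''. First I would run the reduction. As long as the current graph contains an induced $K_{\{i,j\}}$, I pass to the induced subgraph $H$ on $V_{n}\backslash\{x_{j},y_{j}\}$. By the Remark, $H$ is again an unmixed bipartite graph without isolated vertices, and by Corollary \ref{isolat} we have $\mathcal{L}_{H}\simeq\mathcal{L}_{G}$. Deleting vertices can never create a new induced $K_{\{a,b\}}$ among surviving pairs, since all four of its edges would already lie in $G$; and each step strictly decreases the number of vertex pairs. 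Hence the process terminates in an induced subgraph $G^{\prime}$ of $G$ that contains no induced $K_{\{i,j\}}$ and still satisfies $\mathcal{L}_{G^{\prime}}\simeq\mathcal{L}_{G}$.

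Next I would show that such a $G^{\prime}$, free of induced $K_{\{i,j\}}$, is Cohen--Macaulay. On the index set $F$ of $G^{\prime}$ define $i\leq_{P}j$ if and only if $\{x_{i},y_{j}\}\in E(G^{\prime})$. Reflexivity is condition $(a)$. For antisymmetry, if $i\neq j$ and both $\{x_{i},y_{j}\}$ and $\{x_{j},y_{i}\}$ were edges, then together with $\{x_{i},y_{i}\}$ and $\{x_{j},y_{j}\}$ they would span an induced $K_{\{i,j\}}$, which is excluded. Transitivity for distinct indices is exactly condition $(b)$, and the remaining index coincidences are immediate from reflexivity and antisymmetry. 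Thus $P=(F,\leq_{P})$ is a partial order, and labelling the vertices along a linear extension of $P$ makes $G^{\prime}$ satisfy the Herzog--Hibi criterion (diagonal edges present, every edge directed upward, transitivity), so $G^{\prime}$ is Cohen--Macaulay by \cite{HerHib2}.

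Finally I would read off uniqueness and the chain statement. By \cite{HerHib2}, a Cohen--Macaulay bipartite graph $G^{\prime}$ has $\mathcal{L}_{G^{\prime}}$ equal to the distributive lattice $J(P)$ of order ideals of its poset $P$, and by Birkhoff's theorem $P$ is recovered from $J(P)$ as its subposet of join-irreducible elements. Consequently, if $G^{\prime}$ and $G^{\prime\prime}$ are Cohen--Macaulay subgraphs with $\mathcal{L}_{G^{\prime}}\simeq\mathcal{L}_{G}\simeq\mathcal{L}_{G^{\prime\prime}}$, then $J(P^{\prime})\simeq J(P^{\prime\prime})$ forces $P^{\prime}\simeq P^{\prime\prime}$ and hence $G^{\prime}\simeq G^{\prime\prime}$, giving uniqueness up to graph isomorphism. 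For the last assertion, $\mathcal{L}_{G}\simeq\mathcal{L}_{G^{\prime}}=J(P)$ is a finite distributive lattice, which is graded by the cardinality of the order ideal; therefore all maximal chains of $\mathcal{L}_{G}$ have the same length, namely $|P|$.

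The hard part will be the structural identification in the second paragraph: verifying that the mere absence of induced complete bipartite pairs $K_{\{i,j\}}$ upgrades the unmixedness axioms $(a)$ and $(b)$ into a genuine partial order whose associated graph meets the Cohen--Macaulay criterion. Once this is in place, together with the correct bookkeeping that the iterated deletion removes every induced $K_{\{i,j\}}$, the uniqueness and the equal-length-of-chains conclusions follow essentially formally from Birkhoff's correspondence.
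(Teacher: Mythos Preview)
Your proof is correct and follows the same reduction strategy as the paper: repeatedly strip a vertex pair from an induced $K_{\{i,j\}}$ and invoke Corollary~\ref{isolat} to keep $\mathcal{L}$ unchanged. The differences are in the supporting steps. The paper packages the reduction as an induction on $n$ and, at the terminal step, appeals to \cite[Theorem~3.4]{CRT} for the implication ``$G$ not Cohen--Macaulay $\Rightarrow$ $G$ has an induced $K_{\{i,j\}}$''; you instead prove the contrapositive by hand, turning conditions $(a)$ and $(b)$ plus the absence of $K_{\{i,j\}}$ into a partial order and then invoking the Herzog--Hibi characterization. Your route is more self-contained and makes the role of $K_{\{i,j\}}$ transparent. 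For uniqueness, the paper carries it along the induction somewhat implicitly, whereas your Birkhoff argument (recovering the poset of join-irreducibles from $J(P)$) gives a cleaner global statement that any two Cohen--Macaulay subgraphs with lattice isomorphic to $\mathcal{L}_G$ are isomorphic. For the equal-length-of-chains assertion, the paper cites that $\mathcal{L}_{G'}$ is a full sublattice of a Boolean lattice \cite[Theorem~2.2]{HerHib2}, while you use that $J(P)$ is graded; these amount to the same thing. One small remark: your sentence about deletion ``never creating a new induced $K_{\{a,b\}}$'' is true but unnecessary, since the loop condition already guarantees termination at a $K$-free graph.
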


\begin{proof}
We proceed by induction on $n$. If $n=1$, then $G$ is Cohen-Macaulay. Put 
$G^{\prime }=G$ and the assertion trivially holds.

Let us suppose that $n>1$. If $G$ is Cohen-Macaulay, then put $G^{\prime }=G$. 
If $G$ is not Cohen-Macaulay, then, by \cite[Theorem 3.4]{CRT}, $G$ has an induced
subgraph $K_{\{i,j\}}$with $1\leq i<j\leq n$. Let $H$ be the subgraph of $G$
induced by the subset $V_{n}\backslash \{x_{j},y_{j}\}\subset V_{n}$. By the
induction hypothesis there exists a unique Cohen-Macaulay bipartite subgraph $%
G^{\prime }$ of $H$, up to a graph isomorphism, such that 
$\mathcal{L}_{G^{\prime }}\simeq \mathcal{L}_{H}$. Obviously, $G^{\prime }$ 
is also a subgraph of $G$. By Corollary \ref{isolat} $\mathcal{L}_{H}\simeq \mathcal{L}_{G}$, 
hence $\mathcal{L}_{G^{\prime }}\simeq \mathcal{L}_{G}$. Since $G^{\prime }$ is
Cohen-Macaulay, it follows, by \cite[Theorem 2.2]{HerHib2}, that $\mathcal{L}%
_{G^{\prime }}$ is a full sublattice of the Boolean lattice on $%
\{p_{i}|x_{i}\in V(G^{\prime })\}$, which implies that all maximal chains of 
$\mathcal{L}_{G^{\prime }}$ have the same length, hence the conclusion.
\end{proof}

\begin{Remark} \emph{Let $G$ be an unmixed bipartite graph on $V_{n}=W\cup W^{\prime }$, 
$n\geq 1$, without isolated vertices. One may derive a procedure to compute a Cohen-Macaulay 
bipartite subgraph $G^{\prime }$ of $G$ such that the lattices $\mathcal{L}_{G}$ and 
$\mathcal{L}_{G^{\prime }}$ are isomorphic. In fact, $G^{\prime }=G_{F}$, where $F$ is a maximal 
subset of $[n]$ such that $K_{\{i,j\}}$ is not an induced subgraph of $G_{F}$, for all distinct 
$i,j\in F$.}
\end{Remark}

\begin{Remark} 
\label{isoalg} \emph{If $G^{\prime }=G_{F}$, $F\subset \lbrack n]$, is a
Cohen-Macaulay bipartite subgraph of $G$ from Proposition \ref{isol}, then, by
Corollary \ref{isolat}, the lattice isomorphism $\nu :\mathcal{L}_{G^{\prime
}}\rightarrow \mathcal{L}_{G}$ is defined by $\nu (\alpha ^{\prime })=\alpha
^{\prime }\cup \{p_{j}|j\in \lbrack n]\backslash F,p_{i}\in \alpha ^{\prime
},K_{\{i,j\}}\subset G\}$, for all $\alpha ^{\prime }\in \mathcal{L}_{G^{\prime }}$.}
\end{Remark}

\section{A  Gr\"{o}bner basis of the toric ideal of the vertex cover algebra of an unmixed bipartite graph}

Let $S=K[x_{1},...,x_{n},y_{1},...,y_{n}]$ and let $G$ be an unmixed bipartite graph on 
$V_{n}=W\cup W^{\prime }$, $n\geq 1$, without isolated vertices. In this case $A(G)$ is the 
Rees algebra of the \textit{cover ideal} $I_{G}$, which is generated by all monomials 
$x_{1}^{c_{1}}\ldots x_{n}^{c_{n}}y_{1}^{c_{n+1}}...y_{n}^{c_{2n}}$, where $c=(c_{1},\ldots
,c_{2n})$ is a $1$-vertex cover of $G$ (\cite{HerHib3}). Thus

\begin{center}
$A(G)=S\oplus I_{G}t\oplus \ldots \oplus I_{G}^{k}t^{k}\oplus \ldots $
\end{center}

\noindent Let $\mathcal{L}_{G}$ be the lattice associated to $G$. Put 
$B_{G}=K[\{x_{i}\}_{1\leq i\leq n},\{y_{j}\}_{1\leq j\leq n},\{u_{\alpha}\}_{\alpha 
\in \mathcal{L}_{G}}]$. For each $\alpha \in \mathcal{L}_{G}$ we denote 
$m_{\alpha}=(\prod\limits_{p_{i}\in \alpha }x_{i})\cdot (\prod\limits_{p_{j}\not\in \alpha }y_{j})$. 
The \textit{toric ideal} $Q_{G}$ of $A(G)$ is the kernel of the surjective ring homomorphism 
$\xi :B_{G}\rightarrow {A}(G)$, $\xi (x_{i})=x_{i}$, $\xi (y_{i})=y_{i}$, $1\leq i\leq n$, 
$\xi (u_{\alpha })=m_{\alpha }t$, $\alpha\in\mathcal{L}_{G}$.

Let $<_{lex}$ the lexicographic order on $S$ induced by the ordering of the variables 
$x_{1}>...>x_{n}>y_{1}>...>y_{n}$. Let $<^{\#}$ the reverse lexicographic
order on the polynomial ring $K[\{u_{\alpha }\}_{\alpha \in \mathcal{L}%
_{G}}] $ induced by an ordering of the variables $u_{\alpha }$'s such that $%
u_{\alpha }>u_{\beta }$ if $\beta \subset \alpha $ in $\mathcal{L}_{G}$. Let 
$<_{lex}^{\sharp }$ the monomial order on $B_{G}$ defined as the product of
the monomial orders $<_{lex}$ and $<^{\#}$ from above. This monomial order
was introduced in \cite{HerHib1}.

Next, inspired by \cite[Theorem 1.1]{HerHib1}, we compute the reduced Gr\"{o}bner basis 
of the toric ideal of the vertex cover algebra of an unmixed bipartite graph $G$ on 
$V_{n}=W\cup W^{\prime }$, $n\geq 1$, with respect to the monomial order $<_{lex}^{\sharp }$. 
For $\alpha \in \mathcal{L}_{G}$ let $V(\alpha )$ be the set of 
all upper neighbours of $\alpha $ in $\mathcal{L}_{G}$. We denote 
$x_{\beta \backslash \alpha }=\prod\limits_{p_{i}\in \beta
\backslash \alpha }x_{i}$, $y_{\beta \backslash \alpha
}=\prod\limits_{p_{i}\in \beta \backslash \alpha }y_{i}$, where 
$\alpha \in \mathcal{L}_{G}$, $\alpha \not=\{p_{1},p_{2},..,p_{n}\}$ and $\beta \in V(\alpha )$.

\begin{Theorem}
\label{Grobas} Let $G$ be an unmixed bipartite graph on $V_{n}=W\cup W^{\prime }$, 
$n\geq 1$, without isolated vertices. Then the reduced Gr\"{o}bner basis of the toric ideal $%
Q_{G}$ of the vertex cover algebra $A(G)$ with respect to $<_{lex}^{\sharp }$
is:

\begin{center}
$\mathcal{G}_{<_{lex}^{\#}}(Q_{G})=\{\underline{x_{\beta \backslash \alpha }u_{\alpha }}%
-y_{\beta \backslash \alpha }u_{\beta }$ $|$ $\alpha \in \mathcal{L}%
_{G},\alpha \not=\{p_{1},p_{2},..,p_{n}\}$, $\beta \in V(\alpha )\}$

$\cup \{\underline{u_{\alpha }u_{\beta }}-u_{\alpha \cup \beta }u_{\alpha
\cap \beta }$ $|\alpha ,\beta \in $ $\mathcal{L}_{G},\alpha \not\subset
\beta ,\beta \not\subset \alpha \}$,
\end{center}

\noindent where the initial monomial of each binomial is the first monomial.
\end{Theorem}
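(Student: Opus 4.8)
The plan is to establish the two families as the reduced Gr\"obner basis through the standard criterion for toric ideals: a set of binomials $g = g^{+}-g^{-}\in Q_G$, with $g^{+}$ the underlined monomial, is the reduced Gr\"obner basis of $Q_G$ with respect to $<_{lex}^{\sharp}$ as soon as $\xi$ is injective on the standard monomials, i.e.\ on those monomials of $B_G$ divisible by none of the leading monomials. I would first record membership and leading terms by direct computation. Writing $m_\alpha = \prod_{p_i\in\alpha}x_i\prod_{p_j\notin\alpha}y_j$ and using $\alpha\subsetneq\beta$ for $\beta\in V(\alpha)$, one gets $\xi(x_{\beta\setminus\alpha}u_\alpha)=\xi(y_{\beta\setminus\alpha}u_\beta)=\big(\prod_{p_i\in\beta}x_i\big)\big(\prod_{p_j\notin\alpha}y_j\big)t$, so the mixed binomials lie in $Q_G$; for the lattice binomials the indicator identity $\mathbf{1}_\alpha+\mathbf{1}_\beta=\mathbf{1}_{\alpha\cup\beta}+\mathbf{1}_{\alpha\cap\beta}$ gives $\xi(u_\alpha u_\beta)=\xi(u_{\alpha\cup\beta}u_{\alpha\cap\beta})$. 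Since every $x$-variable precedes every $y$-variable in $<_{lex}$, the $S$-component $x_{\beta\setminus\alpha}$ dominates $y_{\beta\setminus\alpha}$ under the product order, so $x_{\beta\setminus\alpha}u_\alpha$ is leading; and because the chosen ordering satisfies $u_\alpha>u_\beta$ whenever $\beta\subset\alpha$, the variable $u_{\alpha\cap\beta}$ is the $<^{\#}$-smallest among the four, making $u_\alpha u_\beta$ leading, exactly as in \cite[Theorem 1.1]{HerHib1}.

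Next I would describe the standard monomials. Divisibility by no $u_\alpha u_\beta$ with $\alpha,\beta$ incomparable forces the $u$-support to be a chain $\gamma_1\subsetneq\cdots\subsetneq\gamma_s$ in $\mathcal{L}_G$; these are precisely the standard monomials of the Hibi subring $K[\{u_\alpha\}]$, whose uniqueness is supplied by \cite[Theorem 1.1]{HerHib1}. Divisibility by no $x_{\beta\setminus\alpha}u_\alpha$ is then a minimality condition on the $x$-exponents: for every $\gamma_\ell$ in the support and every upper neighbour $\beta$ of $\gamma_\ell$, at least one variable $x_i$ with $p_i\in\beta\setminus\gamma_\ell$ is absent. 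Thus a standard monomial has the shape $\mathbf{x}^a\mathbf{y}^b u_{\gamma_1}^{c_1}\cdots u_{\gamma_s}^{c_s}$ with chain support and an $x$-part that cannot be pushed up along any cover step, and the Gr\"obner basis claim becomes equivalent to the statement that two standard monomials with the same image under $\xi$ coincide.

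The heart of the argument, and the step I expect to be the main obstacle, is this injectivity. The new feature compared with the Cohen--Macaulay case is that an upper neighbour $\beta$ of $\alpha$ in $\mathcal{L}_G$ need not add a single $p_i$ but a whole block $\beta\setminus\alpha$ of tied coordinates, which is exactly why the mixed relations carry the products $x_{\beta\setminus\alpha},y_{\beta\setminus\alpha}$ rather than single variables. I would control this by transporting the problem to the Cohen--Macaulay model $G'$ of Proposition \ref{isol}: the lattice isomorphism $\nu$ of Corollary \ref{isolat} and Remark \ref{isoalg} matches each cover step of $\mathcal{L}_{G'}$, which adds one element, with a cover step of $\mathcal{L}_G$ adding the corresponding block. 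Arguing by induction on $n$ and peeling off a pair $\{x_j,y_j\}$ with $j\notin F$ via Lemma \ref{vertcov}, I would reduce the uniqueness of both the chain and the residual pure factor to the full-lattice statement of \cite[Theorem 1.1]{HerHib1}. The delicate point is that the minimality coming from the mixed leading terms must rigidly pin the pure $x$-part: one must show that no $x_i$ sitting over $u_{\gamma_\ell}$ can be silently traded for the equal monomial $y_{\beta\setminus\gamma_\ell}u_\beta$ without violating standardness, and that this rigidity survives the block substitution, so that distinct standard monomials cannot share an image.

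Once injectivity is in hand, the standard monomials map bijectively to the monomials of $A(G)$ and therefore form a $K$-basis of $A(G)$, whence $\mathcal{G}$ is a Gr\"obner basis of $Q_G$. Reducedness I would then verify by inspection: it is immediate from the chain and block descriptions that no leading monomial $x_{\beta\setminus\alpha}u_\alpha$ or $u_\alpha u_\beta$ divides another leading monomial, nor a trailing monomial $y_{\beta'\setminus\alpha'}u_{\beta'}$ or $u_{\alpha'\cup\beta'}u_{\alpha'\cap\beta'}$ of another element of $\mathcal{G}$.
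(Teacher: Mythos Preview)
Your strategy---show that $\xi$ is injective on the monomials not divisible by any underlined leading term---is a legitimate route to the Gr\"obner basis, but it is not the one the paper takes, and the step you yourself flag as the main obstacle is not actually carried out. The paper argues instead through primitive binomials, following \cite[Theorem~1.1]{HerHib1} almost verbatim: every element of the reduced Gr\"obner basis of a toric ideal is a primitive binomial, so one takes an arbitrary $g\in\mathcal{G}_{<_{lex}^{\#}}(Q_G)$, writes both monomials with chain supports $\alpha_1\subsetneq\cdots\subsetneq\alpha_r$ and $\alpha_1'\subsetneq\cdots\subsetneq\alpha_r'$ (the pure $u$-case being the Hibi ideal already covered in \cite{HerHib1}), locates an index $j$ with $\alpha_j'\not\subset\alpha_j$, chooses an upper neighbour $\beta$ of $\alpha_j$ inside $\alpha_j\cup\alpha_j'$, and checks by a counting of $x_i$-degrees that $x_{\beta\setminus\alpha_j}u_{\alpha_j}$ divides $\ini_{<_{lex}^{\#}}(g)$. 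Reducedness then forces $g=x_{\beta\setminus\alpha_j}u_{\alpha_j}-y_{\beta\setminus\alpha_j}u_\beta$. No induction on $n$, no transport to the Cohen--Macaulay model, and no analysis of standard monomials is needed.

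By contrast, your plan to reduce injectivity to $G'$ via Lemma~\ref{vertcov} and Corollary~\ref{isolat} would require real work you have not supplied: the algebras $A(G)$ and $A(H)$ for $H=G_{[n]\setminus\{j\}}$ live over different polynomial rings, the extra variables $x_j,y_j$ do not simply vanish under the lattice isomorphism $\nu$, and the minimality condition ``for every upper neighbour $\beta$ of $\gamma_\ell$ some $x_i$ with $p_i\in\beta\setminus\gamma_\ell$ is absent'' does not transparently descend, since the block $\beta\setminus\gamma_\ell$ in $\mathcal{L}_G$ and the corresponding block in $\mathcal{L}_H$ differ exactly in the coordinate $p_j$. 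Making this precise is possible but is substantially longer than the paper's one-paragraph primitive-binomial argument, and your proposal stops at the point where the difficulty begins.
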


\begin{proof}
We essentially follow the proof of \cite[Theorem 1.1]{HerHib1} with a slight modification 
in its last part. As it was shown there, we only need to consider a primitive binomial $g$ of 
the reduced Gr\"{o}bner basis of $Q_{G}$ with respect to $<_{lex}^{\sharp }$. Let $g\in \mathcal{G}_{<_{lex}^{\#}}(Q_{G})$, 
\[
g=\left( \prod\limits_{i=1}^{n}x_{i}^{a_{i}}y_{i}^{b_{i}}\right) \left(
u_{\alpha _{1}}u_{\alpha _{2}}...u_{\alpha _{r}}\right) -\left(
\prod\limits_{i=1}^{n}x_{i}^{a_{i}^{\prime }}y_{i}^{b_{i}^{\prime }}\right)
\left( u_{\alpha _{1}^{\prime }}u_{\alpha _{2}^{\prime }}...u_{\alpha_{r}^{\prime }}\right),
\]
with $\alpha _{1}\subsetneqq \alpha _{2}\subsetneqq ...\subsetneqq \alpha _{r}$
and $\alpha _{1}^{\prime }\subsetneqq \alpha _{2}^{\prime }\subsetneqq
...\subsetneqq \alpha _{r}^{\prime }$ chains in $\mathcal{L}_{G}$ and $\ini_{<_{lex}^{\#}}(g)$ 
equal to the first monomial of $g$. 

We assume that $g\notin K[\{u_{\alpha }\}_{\alpha \in \mathcal{L}_{G}}]$. As in 
\cite[Theorem 1.1]{HerHib1} we get that there exists some $1\leq j\leq r$ such that 
$\alpha _{j}^{\prime }\not\subset \alpha _{j}$. Let $\beta$ be an upper neighbour of $\alpha _{j}$ with 
$\alpha _{j}\subset \beta\subset \alpha _{j}\cup \alpha _{j}^{\prime }$ and let 
$p_{i}\in \beta \backslash \alpha _{j}$. Then $p_{i}\in \alpha _{k}^{\prime }$ for all $k\geq j$ and $p_{i}\notin \alpha_{l}$ for all $l\leq j$. This implies that $a_{i}>0$ for all $i$ for which  
$p_{i}\in \beta \backslash \alpha _{j}$. Then the binomial 
$h=x_{\beta\backslash \alpha_{j}}u_{\alpha _{j}}-y_{\beta\backslash \alpha _{j}}u_{\beta}\in Q_{G}$ and 
$\ini_{<_{lex}^{\#}}(h)=x_{\beta \backslash \alpha _{j}}u_{\alpha_{j}}|\ini_{<_{lex}^{\#}}(g)$. 
Hence $\ini_{<_{lex}^{\#}}(g)$ must coincide with $x_{\beta \backslash \alpha _{j}}u_{\alpha_{j}}$, 
and, moreover, $h=g$.
\end{proof}

\section{The Hilbert series of the vertex cover algebra of unmixed bipartite graphs}

Let $\{m_{1},m_{2},...,m_{l}\}$ be the minimal system of generators of $%
I_{G} $. We view $A(G)$ as a standard graded $K$-algebra by assigning to each 
$x_{i} $ and $y_{j}$, $1\leq i,j\leq n$ and to each $m_{k}t$, $1\leq k\leq l$%
, the degree $1$.

The Hilbert function and the Hilbert series of the vertex cover algebra $%
A(G) $ are invariant to a certain class of graph isomorphisms.

\begin{Remark}
\label{iso} \emph{Let $\sigma $ be a permutation of $[n]$ and let 
$^{\sigma}G$ denote the bipartite graph on $V_{n}=W\cup W^{\prime }$ with the edge set $E(^{\sigma
}G)=\{\{x_{\sigma (i)},y_{\sigma (j)}\}|\{x_{i},y_{j}\}\in E(G)\}$. The graph isomorphism 
$h:V(G)\rightarrow V(^{\sigma }G)$, 
$h(x_{i})=x_{\sigma (i)}$ and $h(y_{j})=y_{\sigma (j)}$, $i,j\in \lbrack n]$, induces a 
$K$-automorphism of $S$ which maps $I_{G}$ onto $I_{^{\sigma }G}$. Therefore, $A(G)$ and 
$A(^{\sigma }G)$ have the same Hilbert function and series.}
\end{Remark}

Let $\Delta (\mathcal{L}_{G})$ be the \textit{order complex} of the lattice $%
\mathcal{L}_{G}$. (We refer the reader to \cite[\S 5.1]{BruHer} for the
definition and properties of the order complex associated to a poset.) Let $%
S_{G}=K[\{u_{\alpha }\}_{\alpha \in \mathcal{L}_{G}}]$ be the polynomial
ring in $\left\vert \mathcal{L}_{G}\right\vert $ variables over $K$. The
toric ideal $\bar{Q}_{G}$ of the basic cover algebra $\bar{A}(G)$ is the
kernel of the surjective ring homomorphism $\pi :S_{G}\rightarrow \bar{A}(G)$
defined by $\pi (u_{\alpha })=m_{\alpha }$, for all $\alpha \in \mathcal{L}_{G}$.

\begin{Proposition}
\label{vect} The graded $K$-algebra $\bar{A}(G)$ and the order complex $%
\Delta (\mathcal{L}_{G})$ have the same vector $h$-vector.
\end{Proposition}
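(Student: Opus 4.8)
The plan is to prove the equality of $h$-vectors by showing that $\bar A(G)$ and the Stanley--Reisner ring $K[\Delta(\mathcal{L}_G)]$ of the order complex have the same Hilbert series. Since both are standard graded $K$-algebras (each is a quotient of $S_G=K[\{u_\alpha\}_{\alpha\in\mathcal{L}_G}]$ with $\deg u_\alpha=1$), and the $h$-vector of such an algebra is read off from its Hilbert series, equal Hilbert series forces equal $h$-vectors; recalling the standard fact that the algebra $h$-vector of $K[\Delta(\mathcal{L}_G)]$ coincides with the combinatorial $h$-vector of the complex $\Delta(\mathcal{L}_G)$ then gives the statement. I would establish the equality of Hilbert series by identifying $\ini_{<^{\#}}(\bar Q_G)$ with the Stanley--Reisner ideal $I_{\Delta(\mathcal{L}_G)}$, since passing to an initial ideal preserves the Hilbert function.

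The first step is to reduce $\bar Q_G$ to the elimination ideal $Q_G\cap S_G$. I would show that $\bar A(G)$ is isomorphic to the subalgebra $K[\{m_\alpha t\}_{\alpha\in\mathcal{L}_G}]$ of $A(G)$ generated by its degree-one part. The homomorphism $S_G\to K[\{m_\alpha t\}]$, $u_\alpha\mapsto m_\alpha t$, is surjective with kernel $Q_G\cap S_G$, so it suffices to check that the composite $K[\{m_\alpha t\}]\hookrightarrow A(G)\twoheadrightarrow\bar A(G)$ is injective. This follows from a count in the natural bigrading of $A(G)\subset R[t]$: each $m_\alpha$ has $x,y$-degree exactly $n$, so in $t$-degree $k$ the subalgebra $K[\{m_\alpha t\}]$ lies in $x,y$-degree exactly $nk$, whereas $(\mathfrak{m}A(G))_k=\mathfrak{m}I_G^k t^k$ lies in $x,y$-degree at least $nk+1$. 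Hence the intersection vanishes in every degree, the composite is injective, and $\bar Q_G=Q_G\cap S_G$.

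Next I would invoke Theorem \ref{Grobas}. The order $<_{lex}^{\#}$ is the product order in which the variables $x_i,y_j$ dominate the $u_\alpha$, so it is an elimination order for $x_i,y_j$: any monomial involving some $x_i$ or $y_j$ exceeds every monomial in the $u_\alpha$ alone. By the elimination theorem, the elements of $\mathcal{G}_{<_{lex}^{\#}}(Q_G)$ whose leading term is a pure $u$-monomial already lie in $S_G$ and form a Gr\"obner basis of $Q_G\cap S_G=\bar Q_G$ with respect to $<^{\#}$. Inspecting the list in Theorem \ref{Grobas}, the binomials $x_{\beta\backslash\alpha}u_\alpha-y_{\beta\backslash\alpha}u_\beta$ have leading terms divisible by some $x_i$ (as $\beta\backslash\alpha\neq\emptyset$), while the binomials $u_\alpha u_\beta-u_{\alpha\cup\beta}u_{\alpha\cap\beta}$ for incomparable $\alpha,\beta$ are exactly those lying in $S_G$. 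Therefore $\ini_{<^{\#}}(\bar Q_G)=(u_\alpha u_\beta\mid \alpha,\beta\in\mathcal{L}_G\text{ incomparable})$.

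Finally I would recognize this monomial ideal as $I_{\Delta(\mathcal{L}_G)}$: the faces of the order complex are the chains of $\mathcal{L}_G$, so its minimal non-faces are precisely the two-element antichains, i.e.\ the incomparable pairs $\{\alpha,\beta\}$, giving exactly the squarefree quadratic generators above. Consequently $\bar A(G)=S_G/\bar Q_G$ and $K[\Delta(\mathcal{L}_G)]=S_G/I_{\Delta(\mathcal{L}_G)}$ have the same Hilbert series, hence the same $h$-vector, and the latter equals the $h$-vector of $\Delta(\mathcal{L}_G)$. The main obstacle is the reduction $\bar Q_G=Q_G\cap S_G$, that is, verifying that killing $\mathfrak{m}$ yields precisely the toric ring of the degree-one generators $m_\alpha t$; once this is in place, everything else is a direct consequence of Theorem \ref{Grobas} together with elimination and the combinatorics of the order complex.
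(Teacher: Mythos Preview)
Your proof is correct, and the core idea---showing that $\ini_{<^{\#}}(\bar Q_G)$ equals the Stanley--Reisner ideal $I_{\Delta(\mathcal{L}_G)}$, so that $S_G/\bar Q_G$ and $K[\Delta(\mathcal{L}_G)]$ share the same Hilbert series and hence the same $h$-vector---is exactly the one the paper uses.

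The difference lies in how this key equality is obtained. The paper simply invokes \cite[Proposition~3.1]{HerHib2}, which computes the Gr\"obner basis of $\bar Q_G$ directly and identifies its initial ideal with $I_{\Delta(\mathcal{L}_G)}$; the proof in the paper is therefore essentially a one-line citation. You instead derive the same fact internally: you first establish $\bar Q_G = Q_G\cap S_G$ via the bidegree argument showing $K[\{m_\alpha t\}]\cap \mathfrak{m}A(G)=0$, and then apply the elimination theorem to the Gr\"obner basis of $Q_G$ from Theorem~\ref{Grobas}, extracting precisely the Hibi relations $u_\alpha u_\beta - u_{\alpha\cup\beta}u_{\alpha\cap\beta}$. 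This is a legitimate and self-contained alternative: it avoids the external reference at the cost of the extra identification $\bar Q_G = Q_G\cap S_G$, and it has the pleasant feature of making Proposition~\ref{vect} a corollary of Theorem~\ref{Grobas} rather than an independent input. The paper's route is shorter; yours keeps everything inside the paper's own toolkit.
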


\begin{proof}
By \cite[Proposition 3.1]{HerHib2} $\bar{Q}_{G}$ is a graded ideal 
and the initial ideal $\ini_{<^{\#}}(\bar{Q}_{G}) $
of the toric ideal $\bar{Q}_{G}$ coincides with the Stanley-Reisner ideal $%
I_{\Delta (\mathcal{L}_{G})}$, hence $S_{G}/\bar{Q}_{G}$ and $K[\Delta (%
\mathcal{L}_{G})]$ have the same $h$-vector. Since $S_{G}/\bar{Q}_{G}$ 
and $\bar{A}(G)$ are isomorphic as graded $K$-algebras, the conclusion follows.
\end{proof}

\begin{Remark}
\label{hvector} \emph{Let $G$ be an unmixed bipartite graph on $V_{n}=W\cup W^{\prime }$, 
$n\geq 1$, without isolated vertices and let $G^{\prime }$ be a Cohen-Macaulay bipartite 
subgraph of $G$ with $\mathcal{L}_{G}\simeq \mathcal{L}_{G^{\prime }}$. If $h$, respectively 
$h^{\prime }$, are the $h$-vectors of $\bar{A}(G)$, respectively $\bar{A}(G^{\prime })$, then,
by using Proposition \ref{vect} and the fact that the lattices $\mathcal{L}_{G}$ and 
$\mathcal{L}_{G^{\prime }}$ are isomorphic, it follows that $h=h^{\prime }$. Moreover, 
by \cite[Remark 1.3]{Ion}, $h_{i}\geq 0$, for all $0\leq i\leq r+1$ and $h_{r}=h_{r+1}=0$, 
where $r=rank(\mathcal{L}_{G})$.}
\end{Remark}

In order to prove the main theorem we need some preparatory results. They are closely 
related to those for the Cohen-Macaulay case which were proved in \cite{Ion}.

Let $\emptyset \neq F\subsetneqq \lbrack n]$, $P_{n}(F)=\{p_{i}|i\in F\}$ 
and let $\alpha \in \mathcal{L}_{G_{\bar{F}}}$, where $\bar{F}$ denotes 
the complemet set of $F$ in $[n]$. We denote by $\delta _{\alpha }$
the maximal subset of $P_{n}(F)$ such that $\alpha \cup \delta _{\alpha }\in 
\mathcal{L}_{G}$. Note that 
\begin{equation*}
\delta _{\alpha }=\cup \{\gamma \ |\ \gamma \subset P_{n}(F),\alpha \cup
\gamma \in \mathcal{L}_{G}\}.
\end{equation*}%
If we set $\beta =\alpha \cup \delta _{\alpha },$ then, by the definition of 
$\delta _{\alpha },$ $\beta $ has the following property: there exists no
subset $\emptyset \neq A\subset F$ such that $\beta \cup \{p_{i}|i\in A\}$
is an upper neighbour of $\beta $ in $\mathcal{L}_{G}$.

\begin{Lemma}
\label{lemadelta} Let $\emptyset \neq F\subsetneqq \lbrack n]$ and let $%
\mathcal{S}$ be the set of all $\beta \in \mathcal{L}_{G}$ with the property
that there exists no subset $\emptyset \neq A\subset F$ such that $\beta
\cup \{p_{i}|i\in A\}$ is an upper neighbour of $\beta $ in $\mathcal{L}(G)$%
. Then the map $\varphi \colon \mathcal{L}_{G_{\bar{F}}}\rightarrow \mathcal{%
S}$ defined by $\alpha \mapsto \beta =\alpha \cup \delta _{\alpha }$, is an
isomorphism of posets.
\end{Lemma}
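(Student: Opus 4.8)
The plan is to exhibit an explicit inverse to $\varphi$ and check that both maps preserve inclusions. Throughout I would identify $\alpha\in\mathcal{L}_G$ with the minimal vertex cover $C_\alpha=\{x_i\mid p_i\in\alpha\}\cup\{y_i\mid p_i\notin\alpha\}$. Since every minimal vertex cover has cardinality $n$ and the edges $\{x_i,y_i\}$ form a perfect matching, $C_\alpha$ meets each such edge in exactly one vertex, so $C_\alpha$ is a (minimal) vertex cover precisely when it covers every edge $\{x_i,y_j\}\in E(G)$, i.e.\ when $p_j\in\alpha\Rightarrow p_i\in\alpha$ for every such edge. Property $(b)$ makes this forcing relation transitive, and the same description applies to $\mathcal{L}_{G_{\bar F}}$ using only the edges of $G$ with both indices in $\bar F$. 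The candidate inverse is $\psi\colon\mathcal{S}\to\mathcal{L}_{G_{\bar F}}$, $\psi(\beta)=\beta\setminus P_n(F)$.

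First I would settle well-definedness. For $\varphi$ I must check that the family $\{\gamma\subset P_n(F)\mid\alpha\cup\gamma\in\mathcal{L}_G\}$ is nonempty, so that $\delta_\alpha$ (its union) makes sense and $\beta=\alpha\cup\delta_\alpha\in\mathcal{L}_G$. Closing $\alpha$ under the forcing relation of $G$ — adjoining $p_i$ whenever $\{x_i,y_j\}\in E(G)$ with $p_j$ already present — produces, by transitivity, a forcing-closed set $\widehat\alpha\in\mathcal{L}_G$ containing $\alpha$. If an adjoined index $i$ lay in $\bar F$, the responsible chain of edges would by transitivity give an edge $\{x_i,y_j\}$ of $G_{\bar F}$ with $p_j\in\alpha$, whence $p_i\in\alpha$ already since $\alpha\in\mathcal{L}_{G_{\bar F}}$; thus $\widehat\alpha=\alpha\cup\gamma_0$ with $\gamma_0\subset P_n(F)$, providing the required witness. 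That $\beta=\varphi(\alpha)\in\mathcal{S}$ is exactly the maximality remark preceding the lemma: any upper neighbour $\beta\cup\{p_i\mid i\in A\}$ with $\emptyset\neq A\subset F$ would lie in $\mathcal{L}_G$ and equal $\alpha\cup(\delta_\alpha\cup\{p_i\mid i\in A\})$ with a strictly larger $F$-part, contradicting maximality of $\delta_\alpha$. For $\psi$, restricting the forcing condition for $\beta\in\mathcal{L}_G$ to edges with both indices in $\bar F$ shows $\beta\setminus P_n(F)\in\mathcal{L}_{G_{\bar F}}$.

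Next I would verify that $\psi$ and $\varphi$ are mutually inverse. Since $\alpha\subset\{p_i\mid i\in\bar F\}$ and $\delta_\alpha\subset P_n(F)$ are disjoint, $\psi(\varphi(\alpha))=(\alpha\cup\delta_\alpha)\setminus P_n(F)=\alpha$, so $\psi\circ\varphi=\mathrm{id}$. The content lies in $\varphi\circ\psi=\mathrm{id}$. Fix $\beta\in\mathcal{S}$, set $\alpha=\psi(\beta)=\beta\setminus P_n(F)$ and $\gamma=\beta\cap P_n(F)$, so $\beta=\alpha\cup\gamma\in\mathcal{L}_G$; hence $\gamma$ is one of the sets defining $\delta_\alpha$, giving $\beta\subseteq\alpha\cup\delta_\alpha=\varphi(\alpha)$. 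For the reverse inclusion suppose $\beta\subsetneq\varphi(\alpha)$; these two elements of $\mathcal{L}_G$ share the $\bar F$-part $\alpha$, so $\varphi(\alpha)\setminus\beta\subset P_n(F)$. A minimal element $\beta'$ of the nonempty interval $(\beta,\varphi(\alpha)]$ then covers $\beta$ in $\mathcal{L}_G$ (anything strictly between would again lie in the interval) and satisfies $\beta'\setminus\beta\subset P_n(F)$, i.e.\ $\beta'=\beta\cup\{p_i\mid i\in A\}$ for some $\emptyset\neq A\subset F$, contradicting $\beta\in\mathcal{S}$. Hence $\varphi(\alpha)=\beta$. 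This covering argument, feeding off the defining property of $\mathcal{S}$, is the \emph{main obstacle} of the proof.

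Finally, $\psi$ is order-preserving because intersecting with the fixed set $\{p_i\mid i\in\bar F\}$ respects inclusion. For $\varphi$, if $\alpha_1\subseteq\alpha_2$ then $\varphi(\alpha_1)\cup\varphi(\alpha_2)\in\mathcal{L}_G$ has $\bar F$-part $\alpha_1\cup\alpha_2=\alpha_2$, so by maximality of $\delta_{\alpha_2}$ it is contained in $\varphi(\alpha_2)$, forcing $\varphi(\alpha_1)\subseteq\varphi(\alpha_2)$. An inclusion-preserving bijection whose inverse preserves inclusions is a poset isomorphism, which completes the argument.
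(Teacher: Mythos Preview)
Your proof is correct and follows essentially the same approach as the paper: define the inverse $\psi(\beta)=\beta\cap P_n(\bar F)$, verify $\psi\circ\varphi=\mathrm{id}$ and $\varphi\circ\psi=\mathrm{id}$, and check both maps preserve inclusion. Your version is in fact tidier in two places---you make the covering argument for $\varphi\circ\psi=\mathrm{id}$ explicit (the paper's ``$\delta_1$ is maximal'' assertion tacitly needs it), and your proof that $\varphi$ is order-preserving via $\varphi(\alpha_1)\cup\varphi(\alpha_2)\in\mathcal{L}_G$ is more direct than the paper's infinite-sequence contradiction.
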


\begin{proof}
We follow the proof of Lemma 1.4 in \cite{Ion}. We show that $\varphi $ is 
invertible. Indeed, the map $\psi :\mathcal{S}\rightarrow \mathcal{L}(G_{\bar{F}})$ 
defined by $\psi (\beta )=\beta \cap P_{n}(\bar{F})$ is the inverse of 
$\varphi $ since $\alpha =\beta \cap P_{n}(\bar{F})$ and 
$\alpha \in \mathcal{L}(G_{\bar{F}})$.

Let $\ \alpha _{1},\alpha _{2}\in \mathcal{L}(G_{\bar{F}})$ with $\alpha
_{1}\subsetneqq \alpha _{2}$ and $\beta _{i}=\varphi (\alpha _{i})=\alpha
_{i}\cup \delta _{i},\ i=1,2.$ We only need to show that $\beta _{1}\subset
\beta _{2}$ since the strict inclusion follows from the hypothesis $\alpha
_{1}\subsetneqq \alpha _{2}$. Let us assume that $\beta _{1}\not\subset \beta
_{2}$ and let $p_{r_{1}}\in \beta _{1}\backslash \beta _{2}$ with $r_{1}\in
F $. Since $p_{r_{1}}\notin \delta _{2}$, it follows that $\beta _{2}\cup
\{p_{r_{1}}\}\notin \mathcal{L}_{G}$.

We claim that $\{u\in F|p_{u}\in \beta _{1}\backslash \{p_{r_{1}}\}\}\neq
\emptyset $. Let us suppose, on the contrary, that $p_{u}\not\in \beta
_{1}\backslash \{p_{r_{1}}\}$, for all $u\in F$. Then $\beta _{1}=\alpha
_{1}\cup \{p_{r_{1}}\}$. Since $\beta _{1},\beta _{2}\in \mathcal{L}_{G}$,
it follows that $\beta _{1}\cup \beta _{2}\in \mathcal{L}_{G}$. On the other
hand, we have $\beta _{1}\cup \beta _{2}=\beta _{2}\cup \{p_{r_{1}}\}$,
which implies that $\beta _{1}\cup \beta _{2}\notin \mathcal{L}_{G}$, a
contradiction.

By repeated application of this argument we get the sequence 
$r_{1},r_{2},...,r_{k},r_{k+1},...$ with $r_{k+1}\in F\backslash
\{r_{1},...,r_{k}\}$ and $p_{r_{k+1}}\in \beta _{1}\backslash (\beta _{2}\cup
\{p_{r_{1}},...,p_{r_{k}}\})$, for all $k \geq 0 $. Therefore, the set $F$ 
is infinite, which is impossible. Hence $\beta _{1}\subset \beta _{2}$.

Now let $\beta _{1},\beta _{2}\in \mathcal{S}$ with $\beta _{1}\subsetneqq 
\beta _{2}$ and assume that $\alpha _{1}=\alpha _{2},$ where $\alpha
_{1}=\beta _{1}\cap P_{n}(\bar{F}),$ and $\alpha _{2}=\beta _{2}\cap P_{n}(%
\bar{F}).$ Then $\delta _{1}=\beta _{1}\setminus P_{n}(\bar{F})\subsetneqq
\delta _{2}=\beta _{2}\setminus P_{n}(\bar{F}).$ But this is impossible
since $\delta _{1}$ is maximal among the subsets $\gamma \subset P_{n}(F)$ 
such that $\alpha _{1}\cup \gamma \in \mathcal{L}_{G}$.
\end{proof}

The next result relates the Hilbert series of the vertex cover
algebras $A(G)$ to the Hilbert series of the basic covers algebras $\bar{A}(G_{F})$, 
for all $F\subset \lbrack n]$. If $F=\emptyset $, we put by convention 
$H_{\bar{A}(G_{F})}(z)=\frac{1}{1-z}$. 

\begin{Theorem}
\label{comp} Let $G$ be an unmixed bipartite graph on $V_{n}=W\cup W^{\prime }$, 
$n\geq 1$, without isolated vertices. For $F\subset \lbrack n]$ let 
$r_{F}=\rank(\mathcal{L}_{G_{F}})$, let $H_{\bar{A}(G_{F})}(z)$ be the Hilbert series 
of $\bar{A}(G_{F})$, and $H_{A(G)}(z)$ be the Hilbert series of $A(G)$. Then:

\begin{equation}
H_{A(G)}(z)=\frac{1}{(1-z)^{n}}\sum\limits_{F\subset \lbrack n]}H_{\bar{A}%
(G_{F})}(z)\left( \frac{z}{1-z}\right) ^{n-\left\vert F\right\vert }\text{.}
\label{equ6}
\end{equation}

In particular, if $h(z)=\sum\limits_{j\geq 0}h_{j}z^{j}$, respectively $%
h^{F}(z)=\sum\limits_{j\geq 0}h_{j}^{F}z^{j}$, where $h=(h_{j})_{j\geq 0}$, 
respectively $h^{F}=(h_{j}^{F})_{j\geq 0}$, are the $h$-vectors of $A(G)$, 
respectively of $\bar{A}(G_{F})$, then 
\begin{equation}
h(z)=\sum\limits_{F\subset \lbrack n]}h^{F}(z)(1-z)^{\left\vert F\right\vert
-r_{F}}z^{n-\left\vert F\right\vert }\text{.}  \label{equ7}
\end{equation}
\end{Theorem}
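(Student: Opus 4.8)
The plan is to decompose the Rees algebra $A(G)$ graded pieces according to how minimal vertex covers of $G$ restrict to each induced subgraph $G_F$, and then translate this combinatorial decomposition into the Hilbert series identity. First I would recall that $A(G)=\bigoplus_{k\geq 0} I_G^k t^k$, and that the Hilbert function of $A(G)$ counts monomials $x^c y^{c'} t^k$ where $(c,c')$ ranges over $k$-vertex covers, weighted by the total degree. The key structural input is that every $k$-vertex cover decomposes canonically via the lattice $\mathcal{L}_G$: a vertex cover is determined by a chain in $\mathcal{L}_G$ together with nonnegative multiplicities, and the basic cover algebra $\bar{A}(G)$ captures exactly the ``reduced'' covers supported on the lattice while the extra polynomial factors $x_i,y_i$ account for the free directions.

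The heart of the argument will be a bijective or generating-function partition of the covers of $G$ indexed by subsets $F\subset[n]$. The idea is that $\delta_\alpha$ and the poset isomorphism $\varphi\colon \mathcal{L}_{G_{\bar F}}\to\mathcal{S}$ from Lemma~\ref{lemadelta} let me sort each element $\beta\in\mathcal{L}_G$ by the ``saturated'' directions $F$ in which $\beta$ admits no upper neighbour obtained by adding variables indexed in $F$. Concretely, I would fix $F$ to be the set of indices that are free (contribute a factor $\tfrac{z}{1-z}$ coming from the geometric series in the extra polynomial variable directions) and identify the remaining combinatorial data with the order complex $\Delta(\mathcal{L}_{G_F})$, whose $h$-vector is that of $\bar A(G_F)$ by Proposition~\ref{vect}. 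Summing the resulting contributions $H_{\bar A(G_F)}(z)\left(\tfrac{z}{1-z}\right)^{n-|F|}$ over all $F$, with the overall factor $\tfrac{1}{(1-z)^n}$ absorbing the $n$ standard polynomial generators $x_i$ (or $y_i$), should yield \eqref{equ6}.

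To pass from \eqref{equ6} to \eqref{equ7} I would clear denominators using the standard relation between Hilbert series and $h$-vectors: if $\bar A(G_F)$ has Krull dimension tied to $r_F=\rank(\mathcal{L}_{G_F})$, then $H_{\bar A(G_F)}(z)=h^F(z)/(1-z)^{d_F}$ for the appropriate $d_F$, and $A(G)$ itself has Hilbert series $h(z)/(1-z)^{D}$. Substituting these into \eqref{equ6}, every term acquires a power of $(1-z)$; matching the global denominator and reading off the numerator polynomials gives the exponent $|F|-r_F$ on $(1-z)$ and $n-|F|$ on $z$ exactly as in \eqref{equ7}. The bookkeeping here is purely formal once the dimension count $\dim \bar A(G_F)=r_F+1$ (equivalently, that the order complex $\Delta(\mathcal{L}_{G_F})$ is pure of the expected dimension, which follows from Proposition~\ref{isol}) is in hand.

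The main obstacle I anticipate is making the decomposition of covers in the first step genuinely bijective and compatible with the grading, rather than just a formal generating-function manipulation. The subtlety is that a single element of $\mathcal{L}_G$ can be ``free'' in several directions simultaneously, so I must argue that assigning to each cover the \emph{maximal} saturated set $F$ (via the $\delta_\alpha$ construction and the isomorphism $\varphi$) partitions the covers without overcounting; this is precisely where Lemma~\ref{lemadelta} is essential, since it guarantees that $\mathcal{L}_{G_{\bar F}}$ maps isomorphically onto the saturated sublattice $\mathcal{S}$. Verifying that the induced grading on each block matches $H_{\bar A(G_F)}(z)\left(\tfrac{z}{1-z}\right)^{n-|F|}$—in particular that the $n-|F|$ free directions each contribute the correct degree shift—is the delicate point, and I would handle it by tracking the total degree of the monomial $x^c y^{c'} t^k$ through the isomorphism $\varphi$ carefully.
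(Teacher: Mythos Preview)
Your outline matches the paper's approach: for \eqref{equ6} the paper simply invokes \cite[Theorem~1.5]{Ion} (the Cohen--Macaulay case carries over verbatim once Theorem~\ref{Grobas} and Lemma~\ref{lemadelta} are available), and for \eqref{equ7} it substitutes $H_{A(G)}(z)=h(z)/(1-z)^{2n+1}$ and $H_{\bar A(G_F)}(z)=h^{F}(z)/(1-z)^{r_F+1}$ exactly as you describe.

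One point in your write-up is garbled and would trip you up if you tried to execute it as stated. The subset $F$ in the sum is \emph{not} obtained by ``assigning to each cover the maximal saturated set via $\delta_\alpha$''; that is not how the partition works, and trying to define $F$ that way does not give a disjoint decomposition. The correct bookkeeping runs through the Gr\"obner basis of Theorem~\ref{Grobas}: the standard monomials $x^{a}y^{b}u_{\alpha_1}\cdots u_{\alpha_r}$ (with $\alpha_1\subsetneq\cdots\subsetneq\alpha_r$) form a $K$-basis of $A(G)$, and one sets $F=\{i\in[n]:a_i=0\}$. Then $y^{b}$ is free and contributes $1/(1-z)^{n}$, the $n-|F|$ strictly positive $x$-exponents contribute $(z/(1-z))^{n-|F|}$, and the standardness condition ``no $x_{\beta\setminus\alpha_j}$ divides $x^{a}$ for any upper neighbour $\beta$ of $\alpha_j$'' says precisely that every $\alpha_j$ lies in the set $\mathcal S$ of Lemma~\ref{lemadelta} (applied with the roles of $F$ and $\bar F$ exchanged). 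That lemma then identifies chains in $\mathcal S$ with chains in $\mathcal L_{G_F}$, which by Proposition~\ref{vect} are counted by $H_{\bar A(G_F)}(z)$. So $\delta_\alpha$ and $\varphi$ are used \emph{after} $F$ is fixed, to translate the chain data, not to define $F$ itself.
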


\begin{proof}
(\ref{equ6}) can be proved exactly as in \cite[Theorem 1.5]{Ion}.

It is known that $H_{A(G)}(z)=\frac{h(z)}{(1-z)^{2n+1}}$ (since dim$A(G)=$ dim$S+1=2n+1$ \cite{BruHer}) and $H_{\bar{A}(G_{F})}=\frac{h^{F}(z)}{(1-z)^{r_{F}+1}}$ (since dim$\bar{A}(G_{F})=r_{F}+1$ \cite{BerMic}), 
for all $F\subset \lbrack n]$, hence 
$h(z)=\sum\limits_{F\subset \lbrack n]}h^{F}(z)(1-z)^{\left\vert F\right\vert
-r_{F}}z^{n-\left\vert F\right\vert }$.
\end{proof}

\begin{Remark} \emph{
By using (\ref{equ7}) we get}\\

\begin{center}
$h_{n+1}=\sum\limits_{F\subset \lbrack n]}(-1)^{\left\vert F\right\vert
-r}h_{r+1}^{F}$ \emph{and} $h_{n}=\sum\limits_{F\subset \lbrack n]}(-1)^{\left\vert F\right\vert
-r}[h_{r}^{F}-(\left\vert F\right\vert -r)h_{r+1}^{F}]$,
\end{center}

\noindent \emph{where $r=r_{F}=\rank(\mathcal{L}_{G_{F}})$. By Remark \ref{hvector},  
$h_{r}^{F}=h_{r+1}^{F}=0$, for all $\emptyset \neq F\subset \lbrack n]$. 
Hence $h_{n+1}=h_{1}^{\emptyset }=0$, $h_{n}=h_{0}^{\emptyset }=1$ and the $a$-invariant 
of $A(G)$ is $a=-n-1$. In \cite[Corollary 4.4]{HerHib3} 
it was proved that $A(G)$ is a Gorenstein ring, therefore, by 
\cite[Corollary 4.3.8 (b) and Remark 4.3.9 (a)]{BruHer},  
$h_{i}=h_{n-i}$, for all $0\leq i\leq n$.}
\end{Remark}

\begin{Corollary}
\label{multi} Let $G$ be an unmixed bipartite graph on $V_{n}=W\cup W^{\prime }$, 
$n\geq 1$, without isolated vertices. Then

\begin{equation}
e(A(G))=\sum\limits_{\substack{ F\subset \lbrack n] \\ G_{F}\text{
Cohen-Macaulay}}}e(\bar{A}(G_{F}))\text{,}  \label{equ11}
\end{equation}

\noindent where, by convention, $G_{\emptyset }$ is considered a Cohen-Macaulay subgraph
of $G$.
\end{Corollary}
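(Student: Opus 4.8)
The plan is to obtain the multiplicity by specializing the $h$-polynomial identity (\ref{equ7}) at $z=1$. For a standard graded $K$-algebra with Hilbert series $H(z)=h(z)/(1-z)^{d}$, where $d$ is the Krull dimension, the multiplicity equals $h(1)$, the sum of the entries of the $h$-vector (\cite{BruHer}). In particular $e(A(G))=h(1)$ and $e(\bar{A}(G_{F}))=h^{F}(1)$ for every $F\subset[n]$. So the first step is simply to substitute $z=1$ into (\ref{equ7}).

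Each summand of (\ref{equ7}) carries the factor $(1-z)^{|F|-r_{F}}$. Since $\mathcal{L}_{G_{F}}$ is a sublattice of the Boolean lattice on $\{p_{i}\mid i\in F\}$ containing both $\emptyset$ and $\{p_{i}\mid i\in F\}$, every chain in it is a chain in that Boolean lattice, so $r_{F}\le|F|$ and $|F|-r_{F}$ is a nonnegative integer. Evaluating at $z=1$, the factor $(1-z)^{|F|-r_{F}}$ therefore vanishes when $|F|-r_{F}>0$ and equals $1$ (the empty product) when $|F|=r_{F}$, while $z^{n-|F|}$ equals $1$. Hence only the terms with $r_{F}=|F|$ contribute, and $h(1)=\sum_{F:\,r_{F}=|F|}h^{F}(1)$.

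The crux is to identify $\{F\mid r_{F}=|F|\}$ with $\{F\mid G_{F}\text{ is Cohen-Macaulay}\}$. By \cite[Theorem 2.2]{HerHib2}, $G_{F}$ is Cohen-Macaulay exactly when $\mathcal{L}_{G_{F}}$ is the full Boolean lattice on $\{p_{i}\mid i\in F\}$, which has rank $|F|$; this gives one direction. For the converse, Proposition \ref{isol} furnishes a Cohen-Macaulay subgraph $G_{F'}$ of $G_{F}$ on a subset $F'\subset F$ with $\mathcal{L}_{G_{F'}}\simeq\mathcal{L}_{G_{F}}$, so that $r_{F}=|F'|$ because $\mathcal{L}_{G_{F'}}$ is the full Boolean lattice on $F'$. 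Thus $r_{F}=|F|$ forces $F'=F$, i.e.\ $G_{F}$ is Cohen-Macaulay. The point to be careful about is that $r_{F}$ is a well-defined rank at all, i.e.\ that all maximal chains of $\mathcal{L}_{G_{F}}$ have the same length; this is precisely the last assertion of Proposition \ref{isol}.

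Combining these facts gives $e(A(G))=h(1)=\sum_{F:\,G_{F}\text{ CM}}h^{F}(1)=\sum_{F:\,G_{F}\text{ CM}}e(\bar{A}(G_{F}))$, which is (\ref{equ11}). The empty set fits the stated convention: $r_{\emptyset}=|\emptyset|=0$ marks $G_{\emptyset}$ as Cohen-Macaulay, and $H_{\bar{A}(G_{\emptyset})}(z)=1/(1-z)$ yields $h^{\emptyset}(1)=1=e(\bar{A}(G_{\emptyset}))$. The only genuine work is the lattice-theoretic equivalence in the third paragraph; the rest is a one-line substitution into (\ref{equ7}).
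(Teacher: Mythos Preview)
Your proof is correct and follows the same route as the paper: evaluate the $h$-polynomial identity (\ref{equ7}) at $z=1$, so that only the summands with $|F|=r_{F}$ survive, and then identify this condition with $G_{F}$ being Cohen--Macaulay. The paper's two-line proof simply invokes \cite[Theorem 2.2]{HerHib2} for the full equivalence $G_{F}$ Cohen--Macaulay $\Leftrightarrow r_{F}=|F|$, whereas you take a small detour through Proposition~\ref{isol} for the direction $r_{F}=|F|\Rightarrow$ Cohen--Macaulay; both arguments are valid and amount to the same thing.
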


\begin{proof}
By \cite[Theorem 2.2]{HerHib2} $G_{F}$ is a Cohen-Macaulay bipartite graph if and only 
if $\rank(\mathcal{L}_{G_{F}})=\left\vert F\right\vert $, for all 
$\emptyset \neq F\subset \lbrack n]$. Thus (\ref{equ11}) follows immediately from (\ref{equ7}).
\end{proof}

We compute the Hilbert series of the vertex cover algebra of unmixed
complete bipartite graphs $K_{n,n}$, $n\geq 1$.

\begin{Proposition}
\label{complet} For all $n\geq 1$ $H_{A(K_{n,n})}(z)=\frac{1+z+...+z^{n}}{%
(1-z)^{2n+1}}$. In particular, the multiplicity $e(A(K_{n,n}))=n+1$.
\end{Proposition}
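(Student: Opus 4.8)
The plan is to feed $G=K_{n,n}$ into the $h$-vector formula (\ref{equ7}) and evaluate the resulting binomial sum in closed form; the whole computation hinges on identifying the lattice $\mathcal{L}_{K_{n,n}}$ and the induced subgraphs $G_{F}$. First I would determine the minimal vertex covers. Since every pair $\{x_{i},y_{j}\}$ is an edge of $K_{n,n}$, a vertex cover that omits some $x_{i}$ must contain every $y_{j}$, and symmetrically; hence $\mathcal{M}(K_{n,n})=\{W,W'\}$, and the associated lattice is the two-element chain $\mathcal{L}_{K_{n,n}}=\{\emptyset,\{p_{1},\dots,p_{n}\}\}$, of rank $1$.

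Next I would analyze the induced subgraphs. For $\emptyset\neq F\subset\lbrack n]$ the induced subgraph $G_{F}$ contains all edges $\{x_{i},y_{j}\}$ with $i,j\in F$, so $G_{F}\cong K_{|F|,|F|}$ and its lattice $\mathcal{L}_{G_{F}}$ is again a two-element chain; in particular $r_{F}=\rank(\mathcal{L}_{G_{F}})=1$ for every $f:=|F|\geq 1$. The key input is the $h$-vector of $\bar{A}(K_{f,f})$: by Proposition \ref{vect} it coincides with the $h$-vector of the order complex $\Delta(\mathcal{L}_{G_{F}})$, and the order complex of a two-element chain is a single edge, i.e.\ a full $1$-simplex, whose $h$-polynomial is $h^{F}(z)=1$. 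For $F=\emptyset$ the stated convention $H_{\bar{A}(G_{\emptyset})}(z)=\frac{1}{1-z}$ gives $h^{\emptyset}(z)=1$ and $r_{\emptyset}=0$.

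Substituting these data into (\ref{equ7}) and grouping the $\binom{n}{f}$ subsets of each cardinality $f$, the $F=\emptyset$ term contributes $z^{n}$ and each $f\geq 1$ contributes $\binom{n}{f}(1-z)^{f-1}z^{n-f}$, so that
\[
h(z)=z^{n}+\sum_{f=1}^{n}\binom{n}{f}(1-z)^{f-1}z^{n-f}.
\]
Writing $(1-z)^{f-1}=(1-z)^{f}/(1-z)$ and invoking the binomial identity $\sum_{f=0}^{n}\binom{n}{f}(1-z)^{f}z^{n-f}=((1-z)+z)^{n}=1$, the sum collapses to $(1-z^{n})/(1-z)=1+z+\dots+z^{n-1}$, whence $h(z)=1+z+\dots+z^{n}$. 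Since $\dim A(G)=2n+1$ and $H_{A(G)}(z)=h(z)/(1-z)^{2n+1}$, this yields $H_{A(K_{n,n})}(z)=(1+z+\dots+z^{n})/(1-z)^{2n+1}$, and the multiplicity is $e(A(K_{n,n}))=h(1)=n+1$.

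As a cross-check the multiplicity can also be read off from Corollary \ref{multi}: the only Cohen-Macaulay induced subgraphs are $G_{\emptyset}$ and the $n$ single-edge subgraphs $K_{1,1}$ (since $\rank(\mathcal{L}_{G_{F}})=|F|$ forces $f\in\{0,1\}$), each contributing multiplicity $1$, for a total of $n+1$. The main obstacle is not any single hard step but keeping the bookkeeping correct: verifying that $\mathcal{L}_{K_{f,f}}$ is a rank-$1$ chain for \emph{all} $f\geq 1$ (so that the exponent $|F|-r_{F}=f-1$ is uniform across the sum) and isolating the $F=\emptyset$ term so that the binomial identity applies cleanly.
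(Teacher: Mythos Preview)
Your argument is correct, but it takes a different route from the paper. The paper does not invoke formula (\ref{equ7}) at all: once it identifies $\mathcal{L}_{K_{n,n}}=\{\emptyset,\{p_1,\dots,p_n\}\}$, it applies Theorem \ref{Grobas} directly to see that the toric ideal $Q_{K_{n,n}}$ is principal, generated by a single binomial of degree $n+1$. The short exact sequence $0\to B_{K_{n,n}}(-(n+1))\to B_{K_{n,n}}\to A(K_{n,n})\to 0$ then yields $H_{A(K_{n,n})}(z)=(1-z^{n+1})/(1-z)^{2n+2}$ immediately.

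So the paper's proof is shorter and more self-contained, and it gives the minimal free resolution as a byproduct. Your approach, by contrast, exercises the general decomposition formula (\ref{equ7}) and shows explicitly how the contributions from all induced subgraphs $G_{F}\cong K_{|F|,|F|}$ assemble via a binomial identity into the closed form; this is a nice illustration of Theorem \ref{comp} in action and your cross-check through Corollary \ref{multi} is a pleasant bonus. The trade-off is that you rely on the heavier machinery of Theorem \ref{comp}, whereas the paper needs only the Gr\"{o}bner basis description of $Q_G$.
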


\begin{proof}
$\mathcal{L}_{K_{n,n}}=\{\emptyset ,\{p_{1},p_{2},...,p_{n}\}\}$, therefore,
by Theorem \ref{Grobas},\ $Q_{K_{n,n}}$ is a principal ideal generated by $%
b=x_{1}...x_{n}u_{2}-y_{1}...y_{n}u_{1}$, where $u_{1}=u_{\{p_{1},...,p_{n}\}}$ 
and $u_{2}=u_{\emptyset }$. Then $A(K_{n,n})\simeq B_{K_{n,n}}/Q_{K_{n,n}}$ and the minimal 
graded free resolution of $A(K_{n,n})$ is given by the exact short sequence 
$0\rightarrow B_{K_{n,n}}(-(n+1))\overset{\cdot \text{ }b}{\rightarrow }%
B_{K_{n,n}}\rightarrow A(K_{n,n})\rightarrow 0$. Hence $H_{A(K_{n,n})}(z)=%
\frac{1+z+...+z^{n}}{\left( 1-z\right) ^{2n+1}}$. In particular, the multiplicity 
$e(A(K_{n,n}))=n+1$.
\end{proof}

Let $P_{n}=\{p_{1},p_{2},\ldots ,p_{n}\}$ be a poset with a partial order $\leq $. We denote 
by $G(P_{n})$ the bipartite graph on $V_{n}=W\cup W^{\prime }$, whose edge set 
$E(G)$ consists of all $2$-element subsets $\{x_{i},y_{j}\}$ with $p_{i}\leq p_{j}$. It is said 
that a bipartite graph $G$ on $V_{n}=W\cup W^{\prime }$ \textit{comes from a poset}, if there 
exists a finite poset $P_{n}$ on $\{p_{1},p_{2},\ldots ,p_{n}\}$ such that $p_{i}\leq p_{j}$ 
implies $i\leq j$, and after relabeling of the vertices of $G$ one has $G=G(P_{n})$. 

\begin{Corollary}
\label{ineq} Let $G$ be an unmixed bipartite graph on $V_{n}=W\cup W^{\prime }$, 
$n\geq 1$, without isolated vertices. Then 
\begin{equation*}
n+1\leq e(A(G))\leq n!\sum\limits_{l=0}^{n}\frac{1}{l!}\text{.}
\end{equation*}%
The left equality holds if and only if $G=K_{n,n}$ and the right equality
holds if and only if $G$ comes from an antichain with $n$ elements.
\end{Corollary}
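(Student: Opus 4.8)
The plan is to derive both bounds directly from the multiplicity formula \eqref{equ11} of Corollary \ref{multi}, namely $e(A(G))=\sum_{F\subset[n],\,G_F\text{ C-M}}e(\bar A(G_F))$. The key preliminary observation is that for a Cohen-Macaulay bipartite subgraph $G_F$ we have $\rank(\mathcal{L}_{G_F})=|F|$, so $\bar A(G_F)$ is a Cohen-Macaulay standard graded algebra whose multiplicity equals the number of facets of the order complex $\Delta(\mathcal{L}_{G_F})$; equivalently, by Proposition \ref{vect}, $e(\bar A(G_F))$ is the number of maximal chains in $\mathcal{L}_{G_F}$, a lattice of rank $|F|$ whose maximal chains all have length $|F|$ by Proposition \ref{isol}. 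In particular each term $e(\bar A(G_F))\geq 1$, with equality exactly when $\mathcal{L}_{G_F}$ is a chain.

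\textbf{Lower bound.} First I would show $e(A(G))\geq n+1$. Since $G$ has no isolated vertices, $K_{\{i,i\}}$-type edges $\{x_i,y_i\}$ exist for every $i$, and one can exhibit at least $n+1$ subsets $F\subset[n]$ for which $G_F$ is Cohen-Macaulay and contributes to the sum. The cleanest route is to note that $F=\emptyset$ contributes $1$, and that every singleton $F=\{i\}$ gives $G_F=K_{1,1}$, which is Cohen-Macaulay and contributes $e(\bar A(G_{\{i\}}))\geq 1$; this already yields $e(A(G))\geq n+1$. For the equality case, $e(A(G))=n+1$ forces every Cohen-Macaulay $G_F$ with $|F|\geq 2$ to contribute nothing, i.e.\ no such $F$ exists, and every singleton contributes exactly $1$. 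Combined with Proposition \ref{complet}, which gives $e(A(K_{n,n}))=n+1$, I would argue that the only unmixed bipartite graph (without isolated vertices) whose only Cohen-Macaulay induced subgraphs on $\geq 2$ vertices fail to exist is $K_{n,n}$, whose lattice is $\{\emptyset,\{p_1,\dots,p_n\}\}$.

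\textbf{Upper bound.} For $e(A(G))\leq n!\sum_{l=0}^n 1/l!$, I would bound each summand and the number of summands. Rewriting, $n!\sum_{l=0}^n 1/l!=\sum_{l=0}^n n!/l!=\sum_{l=0}^n\binom{n}{l}\,l!\,(n-l)!/n!\cdot\ldots$— more usefully, $\sum_{F\subset[n]}$ over a fixed cardinality $|F|=l$ has $\binom{n}{l}$ terms, and for the antichain each Cohen-Macaulay $G_F$ is again a complete bipartite piece with $\mathcal{L}_{G_F}$ a Boolean lattice of rank $l$, whose number of maximal chains is $l!$. Thus for the antichain every subset $F$ is Cohen-Macaulay and contributes exactly $l!$ with $l=|F|$, giving $\sum_{l=0}^n\binom{n}{l}l!=\sum_{l=0}^n n!/(n-l)!=n!\sum_{j=0}^n 1/j!$ after reindexing. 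For a general $G$ I would show that no subset can contribute more than the corresponding antichain term: the number of maximal chains in a lattice of rank $l$ with all maximal chains of length $l$ is at most $l!$ (the Boolean case is extremal), and not every $F$ need be Cohen-Macaulay, so each term is dominated by the antichain's. Summing yields the bound, with equality forcing every $G_F$ Cohen-Macaulay and every $\mathcal{L}_{G_F}$ Boolean, which by Herzog--Hibi's correspondence characterizes exactly the graph coming from an $n$-element antichain.

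The main obstacle will be the upper-bound equality analysis: proving that $e(\bar A(G_F))\leq |F|!$ with equality iff $\mathcal{L}_{G_F}$ is Boolean, and then assembling these term-by-term comparisons into the global rigidity statement that equality in the total sum forces $G$ to come from an antichain. This requires the extremal characterization of chain-counts in graded lattices together with the dictionary between Boolean lattices $\mathcal{L}_{G_F}$ and complete-bipartite (antichain) structure, so I would isolate that as a lemma before combining everything.
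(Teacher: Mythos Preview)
Your approach is essentially the same as the paper's: both start from formula \eqref{equ11}, identify $e(\bar A(G_F))$ with the number of maximal chains of $\mathcal{L}_{G_F}$ (the paper phrases this as the top $f$-vector entry $f_{|F|}^F$ via \cite[Proposition 3.4(3)]{BenCon}), obtain the lower bound from $F=\emptyset$ and the singletons, and obtain the upper bound from the inequality $f_{|F|}^F\le |F|!$ valid because $\mathcal{L}_{G_F}$ is a full sublattice of the Boolean lattice on $|F|$ elements. One small slip: when $G$ comes from an antichain, $G_F$ is a perfect matching, not a ``complete bipartite piece''; what matters, and what you correctly use, is that $\mathcal{L}_{G_F}$ is the full Boolean lattice.
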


\begin{proof}
By (\ref{equ11}) and \cite[Proposition 3.4(3)]{BenCon},  
$e(A(G))=\sum\limits_{\substack{ F\subset \lbrack n]  \\ r=\left\vert F\right\vert }}f_{r}^{F}$, 
where $r=\rank(\mathcal{L}_{G_{F}})$ and $f_{r}^{F}$ is the last component of the $f$%
-vector of the order complex $\Delta (\mathcal{L}_{G_{F}})$. Then $%
e(A(G))=n+1+\sum\limits_{\substack{ F\subset \lbrack n]  \\ r=\left\vert
F\right\vert \geq 2}}f_{r}^{F}$, which implies that $e(A(G))\geq n+1$. The
equality holds if and only if $\rank(\mathcal{L}_{G_{F}})<\left\vert
F\right\vert $ for all $F\subset \lbrack n]$ with $\left\vert F\right\vert
\geq 2$, which is equivalent to $G=K_{n,n}$. On the other hand, $%
e(A(G))=1+\sum\limits_{\substack{ F\subset \lbrack n]  \\ r=\left\vert
F\right\vert \geq 1}}f_{r}^{F}$. If $\rank(\mathcal{L}_{G_{F}})=\left\vert
F\right\vert $, $\emptyset \neq F\subset \lbrack n]$, then $\mathcal{L}_{G_{F}}$ 
is a full sublattice of a Boolean lattice on a set with $%
\left\vert F\right\vert $ elements, hence $f_{\left\vert F\right\vert
}^{F}\leq \left\vert F\right\vert !$ and $e(A(G))\leq 1+\sum\limits 
_{\substack{ F\subset \lbrack n]  \\ \left\vert F\right\vert \geq 1}}\binom{n%
}{\left\vert F\right\vert }\left\vert F\right\vert !=n!\sum\limits_{l=0}^{n}%
\frac{1}{l!}$. The equality holds if and only if $\mathcal{L}_{G_{F}}$ is a
Boolean lattice on a set with $\left\vert F\right\vert $ elements, for all $%
\emptyset \neq F\subset \lbrack n]$, which is equivalent to saying that $G$ comes from an antichain.
\end{proof}

\begin{Remark} \emph{In general, unmixed bipartite graphs are not uniquely determined, 
up to an isomorphism, by the $h$-vector of their corresponding vertex cover algebras.
Let $G_{3}$ be the bipartite graph on $V_{3}$ with the edge set: $%
\{x_{1},y_{1}\}$, $\{x_{2},y_{2}\}$, $\{x_{3},y_{3}\}$, $\{x_{2},y_{3}\}$, $%
\{x_{3},y_{2}\}$ and $G_{3}^{\prime }$ be the bipartite graph on $V_{3}$
that comes from the chain $P_{3}^{\prime }=\{p_{1}^{\prime },p_{2}^{\prime
},p_{3}^{\prime }\}$ with $p_{1}^{\prime }\leq p_{2}^{\prime }\leq
p_{3}^{\prime }$. $G_{3}$ and $G_{3}^{\prime }$ are unmixed and they are not
isomorphic, and by computation we get $H_{A(G_{3})}(z)=H_{A(G_{3}^{\prime
})}(z)=\frac{1+3z+3z^{2}+z^{3}}{(1-z)^{7}} $.}
\end{Remark}

However, unmixed complete bipartite graphs and bipartite graphs that come
from chains and antichains are uniquely determined (up to a graph
isomorphism) by the $h$-vector of their corresponding vertex cover algebras. 
The statement for bipartite graphs that come from chains and antichains was proved 
in \cite[Proposition 2.3]{Ion}.

\begin{Corollary}
Let $G$ be an unmixed bipartite graph on $V_{n}=W\cup W^{\prime }$, $n\geq 1$, 
without isolated vertices. Then $G=K_{n,n}$ if and only if 
$H_{A(G)}(z)=\frac{1+z+...+z^{n}}{(1-z)^{2n+1}}$.
\end{Corollary}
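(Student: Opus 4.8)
The plan is to establish the two implications separately, with almost all of the content being supplied by results already proved in the paper. For the forward direction there is nothing to do beyond a citation: if $G=K_{n,n}$, then Proposition \ref{complet} computes $H_{A(G)}(z)=\frac{1+z+\ldots+z^{n}}{(1-z)^{2n+1}}$ outright, via the principal toric ideal and its short free resolution.

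For the reverse direction, the key observation is that the prescribed Hilbert series pins down the multiplicity of $A(G)$. Since $\dim A(G)=2n+1$, I would write $H_{A(G)}(z)=\frac{h(z)}{(1-z)^{2n+1}}$, where $h(z)$ is the $h$-polynomial of $A(G)$; the hypothesis then forces $h(z)=1+z+\ldots+z^{n}$. Recalling that the multiplicity of a standard graded $K$-algebra equals the value of its $h$-polynomial at $z=1$ (the numerator being taken over $(1-z)^{\dim}$), I obtain $e(A(G))=h(1)=n+1$.

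With the multiplicity determined, I would invoke the left-hand equality case of Corollary \ref{ineq}, where it is shown that $e(A(G))\geq n+1$ with equality if and only if $G=K_{n,n}$. Hence $e(A(G))=n+1$ forces $G=K_{n,n}$, which completes the reverse implication and the proof.

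I do not expect any genuine obstacle, since the whole argument reduces to reading the multiplicity off the Hilbert series and then applying the already-established Corollary \ref{ineq}. The only point requiring a little care is the identification $e(A(G))=h(1)$, which relies on $h(z)$ being the numerator relative to $(1-z)^{\dim A(G)}=(1-z)^{2n+1}$; this is exactly the dimension formula $\dim A(G)=2n+1$ recorded in the proof of Theorem \ref{comp}.
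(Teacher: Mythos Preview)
Your proof is correct, but the reverse implication follows a different path from the paper's. You read off the multiplicity $e(A(G))=h(1)=n+1$ from the given Hilbert series and then invoke the equality case of Corollary~\ref{ineq}. The paper instead reads off the coefficient $h_{1}=1$ and, using (\ref{equ7}) together with the $f$--$h$ relation for $\Delta(\mathcal{L}_{G})$, shows in general that $h_{1}=|\mathcal{L}_{G}|-1$; the hypothesis then forces $|\mathcal{L}_{G}|=2$, whence $\mathcal{L}_{G}=\{\emptyset,\{p_{1},\dots,p_{n}\}\}$ and $G=K_{n,n}$. Your route is shorter and cleanly recycles Corollary~\ref{ineq}; the paper's route avoids that corollary and yields as a byproduct the identity $h_{1}=|\mathcal{L}_{G}|-1$, which may be of independent interest.
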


\begin{proof}
($"\mathbf{If}"$) By using (\ref{equ7}) we get $h_{1}=h_{1}^{[n]}+rank(%
\mathcal{L}_{G}).$ Since $h_{1}^{[n]}$ is the component of rank $1$ in the $%
h $-vector of $\bar{A}(G)$, by using the formula which relates the $h$%
-vector to the $f$-vector of the order complex $\Delta (\mathcal{L}_{G})$, 
we get $h_{1}^{[n]}=\left\vert \mathcal{L}%
_{G}\right\vert -rank(\mathcal{L}_{G})-1,$ which implies that $%
h_{1}=\left\vert \mathcal{L}_{G}\right\vert -1$. By hypothesis, $h_{1}=1$,
hence $\left\vert \mathcal{L}_{G}\right\vert =2$. $G$ is an unmixed
bipartite graph on $V_{n}$, therefore, $\mathcal{L}_{G}=\{\emptyset
,\{p_{1},...,p_{n}\}\}$ and $G=K_{n,n}$.

($"\mathbf{Only}$\textbf{\ }$\mathbf{if}"$) It follows from Proposition \ref%
{complet}.
\end{proof}

\section*{Acknowledgment}

I would like to  thank Professor J\"{u}rgen Herzog for very useful
 discussions on the subject of this paper.

\end{document}